\documentclass[12pt,a4paper]{article}
\usepackage{pgfplots}
\usepackage{a4wide}
\usepackage{authblk}
\usepackage{amsfonts,amsthm,amsmath,amssymb,amscd,color}
\usepackage[left=2.5cm, right=3cm, top=3cm, bottom=3cm]{geometry}
\usepackage{graphicx}
\usepackage{todo}

\theoremstyle{plain}
\newtheorem{lem}{Lemma}

\newtheorem{thm}[lem]{Theorem}
\newtheorem*{thm*}{Theorem}

\newtheorem*{cor*}{Corollary}

\newtheorem*{prop*}{Proposition}

\theoremstyle{definition}

\newtheorem*{defn*}{Definition}

\newtheorem*{ex*}{Example}
\newtheorem{rem}[lem]{Remark}
\newtheorem*{rem*}{Remark}

\theoremstyle{remark}

\newcommand{\Prob}{\mathbb{P}}
\newcommand{\subjclass}[2][2020]{%
  \let\oldempty\thefootnote
  \renewcommand{\thefootnote}{}
  \footnotetext{\textit{#1 Mathematics Subject Classification.} #2}%
  \let\thefootnote\oldempty
}

\DeclareMathOperator{\supp}{supp}

\renewenvironment{proof}{\medbreak{\noindent\em Proof. }}{~{\hfill$\bullet$\bigbreak}}

\def\d{{\rm d}}

\begin{document}

\title{On a problem of \"O. Stenflo \subjclass[2010]{60J05 (primary), 47B80 (secondary)}}
\author{Klaudiusz Czudek and Tomasz Szarek}
\affil{Tomasz Szarek, Faculty of Physics and Applied Mathematics, Gda\'nsk University of Technology, ul. Gabriela Narutowicza 11/12, 80-233 Gda\'nsk, Poland: tszarek@impan.pl}
\affil{Klaudiusz Czudek, Faculty of Physics and Applied Mathematics, Gda\'nsk University of Technology, ul. Gabriela Narutowicza 11/12, 80-233 Gda\'nsk, Poland: klaudiusz.czudek@gmail.com}
\maketitle

 \newcommand{\IFS}{\textbf{IFS}}

\begin{abstract}
We construct an e-chain on a locally compact space with the unique stationary distribution such that the strong law of large numbers does not hold. This answers negatively the question asked by  \"O. Stenflo.
\end{abstract}

\noindent \textbf{Keywords:} Iterated function system, Markov operator, e-chain

\section{Introduction}

Iterated function systems (\textbf{IFS} for short) play an important role in the study of fractals since 1981 when Hutchinson published his celebrated work \cite{Hutchinson}. It was observed there that many fractals can be represented as an attractor of a certain \IFS, which subsequently was exploited to create a deterministic algorithm of producing images of fractals.

Another (random) algorithm of generating images of fractals also based on function systems called 'chaos game' has been proposed by Barnsley \cite{Chaos_game}. In order to describe the  procedure, let us fix an iterated function system $(f_1,\cdots, f_N)$ on $\mathbb{R}^d$ whose attractor is our fractal $\Lambda\subseteq \mathbb{R}^d$, and let us assign a nondegenerated probability distribution $(p_1,\cdots, p_N)$ to $(f_1,\cdots, f_N)$. Those objects give rise to a random walk $(Z_n)$ with transition kernel $\pi(x,\cdot)=\sum_{i=1}^N p_i \delta_{f_i(x)}$, whose random orbits  'fill up' the entire fractal due to the Birkhoff ergodic theorem (see Chapters 3 and 9 in \cite{Chaos_game} for details). Since that time the study of ergodic properties of \textbf{IFS}s gained more importance. The reader interested in the methods used in this field is advised to get acquainted with survey \cite{Stenflo}. In this short note we answer the last question in Section 4, p. 29, from therein.
\vskip5mm

To state the question and the main result precisely we have to introduce some notation. Let $X$ be a metric space.
By $\mathcal{M}(X)$ we denote the set of all finite measures  on the $\sigma$-algebra $\mathcal{B}(X)$ of  all Borel subsets of $X$, and by $\mathcal{M}_1(X)\subseteq\mathcal{M}(X)$ we denote the subset of all probability measures on $X$. By $B(X)$ we denote the family of bounded Borel functions on $X$. Further, by $C(X)$ and $C_c(X)$ we denote the set of bounded continuous functions and the set of continuous functions with compact support, respectively.

Let $f_1,\ldots, f_N\colon X\mapsto X$ be continuous maps, and let $(p_1,\ldots, p_N)$ be a probabilistic vector.
Set $\Omega=\{1, 2\}^{\mathbb N}$, and let $\mathbb P$ denote the Bernoulli measure on $\Omega$ corresponding to the vector $(p_1,\ldots, p_N)$, i.e. $\mathbb P\big( (i_1, \ldots, i_n)\times\{1, \ldots, N\}^{\mathbb N}\big)=p_{i_1}\cdots p_{i_n}$. Let $x\in X$ be given. For $\omega=(i_1, i_2,\ldots)\in \Omega$ we define the Markov chain $(Z_n(x))_{n\ge 0}$ on $\Omega$ by
\[
Z_n(x)(\omega)=f_{i_n, i_{n-1}, \ldots, i_i}(x)\qquad\text{for $n\ge 1$},
\]
where $f_{i_n, i_{n-1}, \ldots, i_i}(x)=f_{i_n}\circ f_{i_{n-1}}\circ\cdots\circ f_{i_1}(x)$.

The iterated function system $(f_1, \ldots, f_N;  p_1, \ldots, p_N)$ generates the Markov operator $P : \mathcal{M}(X)\rightarrow \mathcal{M}(X)$ of the form
\begin{equation}\label{e1_14.10.20}
P\mu(A) = \sum_{i=1}^N p_i\mu(f_i^{-1}(A))\quad\text{for $\mu\in\mathcal{M}(X)$ and $A\in \mathcal{B}(X)$.} 
\end{equation}
 By continuity of the $f_i$, $P$ is a Feller operator, and its predual operator $U : B(X)\rightarrow B(X)$ is given by the formula
$$
U \psi(x)=\sum_{i=1}^N p_i \psi(f_i(x)) \ \textrm{for $\psi\in B(X)$ and $x\in X$.} 
$$

Let
$\pi: X\times \mathcal{B}(X)\to [0, 1]$ be the transition function of the form
$$
\pi (x, A)=U{\bf 1}_A(x)=P\delta_x(A)\quad\text{for $x\in X$ and $A\in\mathcal{B}(X)$.}
$$
The law of the Markov chain $(Z_n)_{n\ge 0}$ with initial distribution $\nu$  satisfies
$$
\mathbb P [Z_{n+1}\in A|Z_n=x]=\pi (x, A)\quad\text{and}\quad\mathbb P [Z_0\in A]=\nu(A),
$$
where $x\in X$, $A \in \mathcal B(X)$. A measure $\mu$ such that $P\mu =\mu$ is called stationary. The set of stationary measures form a convex weakly-$\ast$ closed subset of $\mathcal{M}_1(X)$, whose extremal points are called ergodic measures. If a stationary distribution $\mu$ exists then by the Birkhoff ergodic theorem for every $\psi \in L^1(X,\mu)$ and $\mu$-a.e. $x\in X$ the limit
\begin{equation}
\label{E:ergodic}
\frac{1}{n} \big( \psi(Z_0(x))+\cdots+\psi(Z_{n-1}(x)) \big)
\end{equation}
exists. If, moreover, $\mu$ is ergodic, then the limit does not depend on $x\in X$ and equals $\int_{X} \psi d\mu$.

Breiman has shown \cite{Breiman} that if $X$ is compact and there is exactly one (and therefore ergodic) stationary measure $\mu$, then the convergence \eqref{E:ergodic} holds not only for $\mu$ almost every point $x\in X$ but actually for every $x\in X$ as long as $\psi$ is continuous and $P$ is Feller. If is natural to ask if this theorem holds on locally compact spaces. The immediate answer without further assumptions is no (the counterexample is given by the system of two functions $f_i:\mathbb{R}\rightarrow \mathbb{R}$, $f_i(x)=(-1)^i\cdot 2 x$, $p_i=1/2$ for $i=1,2)$. Elton \cite{Elton} proved that this holds if the system is contracting on average (see Section 4 in \cite{Stenflo} for the definition). Stenflo \cite{Stenflo} posed a question (see Question after Remark 15, p. 29) if the contractivity condition in Elton's result can be replaced with the uniqueness of stationary measure and the condition that the Markov chain is an e--chain. It is said that a Markov chain is an e--chain if for any $\varphi\in C_c (X)$ the family $\{U^n f: n\in\mathbb N\}$ is equi--continuous on compact
sets. The concept of e-chains appears in \cite{Meyn_Tweedie, Stettner, Szarek} and turned out to be a useful tool in the study of Markov processes (see e.g. \cite{LasotaSzarek}).

Let us observe that this question is trivial if we do not assume the space to be connected. Indeed, otherwise we can take $X$ to be the disjoint union $A_1 \sqcup A_2$, where $A_1=\{0\}$ and $A_2=\mathbb{R}$, and set $f_i((x,2))=x+i$, $f_i((0,1))=0$, $p_i=1/2$, $i=1,2$. Then the Markov chain is a uniquely ergodic e--chain. Moreover, $\frac{1}{n}\big(\psi(Z_0(x,j))+\cdots+\psi(Z_{n-1}(x,j)\big)$ converges to 0 a.s. if $j=2$, $x\in \mathbb{R}$, and to $\psi(0,1)$ if $j=1$, $\psi\in C_c(A_1 \sqcup A_2)$. This manuscript is devoted to the negative answer to Stenflo's problem on connected spaces.

{\centering
\begin{tikzpicture}
    \begin{axis}[
        axis lines = middle,
        xmin = 0, xmax = 1.15,
        ymin = 0, ymax = 1.15,
        xtick = {0, 0.5, 1},
        xticklabels = {0, $\frac{1}{2}$, 1},
        ytick = {0, 0.3, 0.7, 1},
        yticklabels = {0, $c$, $1-c$, 1},
        grid = major,
        grid style = {dashed, gray!30},
        unit vector ratio=1 1,
        legend pos = north west,
        legend cell align={left}
    ]

    \def\c{0.3}
    
    \addplot [blue, very thick, domain=0:0.5] {2*(1-\c)*x};
    \addlegendentry{$f_1(x)$}
    \addplot [blue, very thick, domain=0.5:0.98, forget plot] {2*\c*(x - 0.5) + 1 - \c};

    \addplot [green!60!black, very thick, domain=0:0.5] {2*\c*x};
    \addlegendentry{$f_2(x)$}
    \addplot [green!60!black, very thick, domain=0.5:0.98, forget plot] {2*(1-\c)*(x - 0.5) + \c};

    \addplot [domain=0:1, gray, thin, dashed, forget plot] {x};

    \draw[black, fill=white, thick] (axis cs:1,1) circle (2.5pt);

    \end{axis}
    
\end{tikzpicture}

}

Let
\[
f_1(x) = 
\begin{cases} 
2(1-c)x & x\in [0, \tfrac{1}{2}] \\
2c(x-\tfrac{1}{2})+1-c & x\in (\tfrac{1}{2}, 1)
\end{cases}
\quad \text{and} \quad
f_2(x) = 
\begin{cases} 
2cx & x\in [0, \tfrac{1}{2}]  \\
2(1-c)(x-\tfrac{1}{2})+c & x\in (\tfrac{1}{2}, 1),
\end{cases}
\]
where $c$ is an arbitrary positive constant strictly less than $\tfrac{1}{2}$ (see Figure). The system consisting of $f_1^{-1}$, $f_2^{-1}$ (called the Alseda-Misiurewicz system) has been introduced in \cite{AM} and studied later in \cite{BS}, \cite{Czudek}.

\begin{thm}\label{T} Assume that  $(Z_n)_{n\ge 0}$ is a Markov chain on $[0, 1)$ associated with the Iterated Function System $(f_1, f_2; \tfrac{1}{2}, \tfrac{1}{2})$. Then $(Z_n)_{n\ge 0}$ is an e--chain admitting a unique invariant measure $\mu_*=\delta_0$. Moreover, for any $x\in (0, 1)$ and $\varphi\in C_c([0, 1))$ such that $\varphi(0)\neq 0$ we have
\begin{equation}\label{D}
\lim_{n\to\infty}\frac{1}{n} \sum_{k=1}^n \varphi (Z_k(x)(\omega))\neq\int_{[0, 1)} \varphi\d\mu_*
\end{equation}
for $\omega$ in some set $\Omega_0$ with $\mathbb P(\Omega_0)>0$.
\end{thm}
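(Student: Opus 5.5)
The plan rests on two observations. The maps $f_1,f_2$ are increasing piecewise linear homeomorphisms of $[0,1)$ fixing $0$. Both endpoints $0$ and $1$ are attracting on average: near $0$ the slopes are $2(1-c)$ and $2c$, near $1$ they are $2c$ and $2(1-c)$, so in both regions the averaged logarithmic slope is $\lambda=\tfrac12\log\bigl(4c(1-c)\bigr)<0$.

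\textbf{Step 1 (local random walk).} I pass to logarithmic coordinates. On $(0,\tfrac12]$ set $u=\log x$; while the orbit stays in $(0,\tfrac12]$, $\log Z_n$ performs a random walk with i.i.d. increments $\log 2(1-c)$, $\log 2c$ and negative drift $\lambda$. Likewise, on $[\tfrac12,1)$ the quantity $\log(1-Z_n)$ performs the same walk. A random walk with negative drift started at height $-M$ never exceeds $\log\tfrac12$ with probability $q(M)\to1$ as $M\to\infty$. Hence:
\begin{itemize}
\item from $[0,\delta]$ the chain stays in $[0,\tfrac12]$ forever and converges to $0$ with probability at least $1-\varepsilon(\delta)$;
\item from $[1-\delta,1)$ it converges to $1$ with probability at least $1-\varepsilon(\delta)$;
\end{itemize}
where $\varepsilon(\delta)\to0$ as $\delta\to0$.

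Moreover, from any $x\in(0,1)$ a fixed finite word of $f_1$'s (respectively $f_2$'s) brings the orbit into $[1-\delta,1)$ (respectively $[0,\delta]$), with the word length bounded on compact subsets of $(0,1)$. A standard Lévy 0--1 (conditional Borel--Cantelli) argument then gives two facts for every $x\in(0,1)$:
\begin{itemize}
\item almost surely $Z_n(x)\to0$ or $Z_n(x)\to1$;
\item the event $\Omega_0=\{Z_n(x)\to1\}$ has positive probability.
\end{itemize}

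\textbf{Step 2 (uniqueness and failure of the SLLN).} Let $\mu$ be stationary and $K\subset(0,1)$ compact. By Step 1 and dominated convergence, $\mu(K)=P^n\mu(K)=\int \Prob(Z_n(x)\in K)\,\mu(\d x)\to0$, so $\mu((0,1))=0$ and $\mu=\delta_0$. Conversely $\delta_0$ is stationary because $f_i(0)=0$. Now take $\omega\in\Omega_0$ and $\varphi\in C_c([0,1))$. Then $\varphi(Z_k(x)(\omega))=0$ for all large $k$, so the Cesàro averages tend to $0\ne\varphi(0)=\int\varphi\,\d\mu_*$. This proves \eqref{D}.

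\textbf{Step 3 (e-chain property; the main obstacle).} I expect this step to be the hardest. Fix $\varphi\in C_c([0,1))$ with $\supp\varphi\subset[0,b]$, a compact $[0,a]\subset[0,1)$, and $\varepsilon>0$.

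At points near $0$: by Step 1, for $x\le\delta$ the orbit converges to $0$ and stays in $[0,\tfrac12]$ with probability at least $1-\varepsilon$. On that event $Z_n(x)=x\cdot(\text{product of slopes})$, so $Z_n(x)$ is increasing in $x$ and tends to $0$ uniformly. This yields $|U^n\varphi(x)-U^n\varphi(0)|\le C\varepsilon+\omega_\varphi(\cdot)$ uniformly in $n$.

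At interior points $x<y$ in $[\delta,a]$: I would couple the two orbits with the same $\omega$. By monotonicity, $Z_n(x)\le Z_n(y)$. The key quantity is the distance in the coordinate $h(t)=\log\frac{t}{1-t}$. On each linear piece of $f_i$, this distance is changed by at most a bounded factor, and only at steps where the two points lie on different sides of $\tfrac12$. If $y-x$ is small, such straddling steps are rare. Conditioned on the common limit, either both orbits go to $1$, in which case $\varphi$ eventually vanishes and the difference is negligible, or both go to $0$, in which case their ratio $Z_n(y)/Z_n(x)$ is frozen once both are below $\tfrac12$ and is close to $1$.

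The remaining difficulty is to show that, with probability at least $1-\varepsilon$ uniformly in $n$, the two orbits do not split, one to $0$ and one to $1$, when $|x-y|$ is small. I would first try to prove that $x\mapsto\Prob(Z_n(x)\to0)$ is continuous, using monotonicity: the split event equals $\{Z_\infty(x)=0,\ Z_\infty(y)=1\}$, whose probability is $\Prob(Z_\infty(x)=0)-\Prob(Z_\infty(y)=0)$. Continuity of this function should follow from the piecewise linear structure and the atomlessness of the limit of $\log$-coordinates. Combined with the previous paragraph, this gives equicontinuity of $\{U^n\varphi\}$ on $[0,a]$.
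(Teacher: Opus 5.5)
Your Steps 1 and 2 are sound, and more self-contained than the paper's. The dominated-convergence argument gives uniqueness of $\delta_0$ without the Lyapunov function $V$. Exhibiting $\Omega_0=\{Z_n(x)\to 1\}$ proves (D) pathwise, whereas the paper argues by contradiction through $\lim_n\frac1n\sum_{k\le n}U^k\varphi(x)=(1-x)\varphi(0)$.

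The gap is in Step 3, exactly where the e-chain property is decided. You need continuity of $g(x)=\Prob(Z_n(x)\to 0)$ on $(0,1)$, but you only say it ``should follow from the piecewise linear structure and the atomlessness of the limit of log-coordinates.'' That is not an argument, and nothing in Steps 1--2 supplies it. Without it, the split event $\{Z_\infty(x)=0,\ Z_\infty(y)=1\}$ is not shown to be small, so equicontinuity at interior points is unproved. The paper gets much more, namely $g(x)=1-x$ (Lemma \ref{thm:negative_convergent}). It writes $g(x)=\Prob(\upsilon>x)$ for a random threshold $\upsilon$ whose law is stationary for the inverse system $(f_1^{-1},f_2^{-1};\frac12,\frac12)$, and that law is Lebesgue measure by external results.

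You could close the gap with a short argument. $g$ is non-increasing, and by the Markov property $g(x)=\frac12 g(f_1(x))+\frac12 g(f_2(x))$. Since the $f_i$ are increasing homeomorphisms, the jump $J(x)=g(x^-)-g(x^+)$ satisfies the same relation. Suppose $J$ attains a maximal value $m>0$ at some $a\in(0,1)$. Then $J(f_1^k(a))=m$ for all $k$, giving infinitely many distinct jumps of size $m$ (because $f_1(t)>t$), which contradicts boundedness of $g$.

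Even granting continuity, your coupling paragraph gives no control uniform in $n$. ``$\varphi$ eventually vanishes'' depends on $\omega$, and the ratio $Z_n(y)/Z_n(x)$ is not frozen, since the orbits may recross $\frac12$ before settling. A Dini-type argument on a finite grid, using monotonicity of $x\mapsto Z_n(x)$ and continuity of $g$, would repair this. The paper's device is simpler:
\begin{itemize}
\item By density of BV functions, it suffices to treat increasing $\varphi$ that are constant near $1$.
\item Then every $U^n\varphi$ is increasing, so $\sup_{x,y\in[x_0-\delta,x_0+\delta]}|U^n\varphi(x)-U^n\varphi(y)|\le U^n\varphi(x_0+\delta)-U^n\varphi(x_0-\delta)$.
\item The right-hand side tends to $\bigl(g(x_0-\delta)-g(x_0+\delta)\bigr)\bigl(\lim_{t\to1}\varphi(t)-\varphi(0)\bigr)$.
\item Only finitely many $n$ then remain, each handled by continuity of $U^n\varphi$.
\end{itemize}
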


\begin{rem}\label{r1_27.02.26} Since the set of functions with bounded variation is dense in the space $C_c ([0, 1))$, it is sufficient to check the e-chain property for a function with bounded variation. On the other hand, since every such function may be written as a difference of two increasing functions, the e-chain property may be verified on the space of bounded increasing continuous functions on $[0, 1)$ which are constant for $x$ sufficiently closed to $1$.
\end{rem}

\section{Auxiliary Lemmas}

Let $\tilde f_1$ and $\tilde f_2$ be the continuous extensions of $f_1$ and $f_2$ to the closed interval  $[0, 1]$, respectively. 

\begin{lem}\label{negative}  All invariant measures of the iterated function system $(\tilde f_1, \tilde f_2; \tfrac{1}{2}, \tfrac{1}{2})$ (acting on $[0, 1]$) are of the form $\beta \delta_0+(1-\beta)\delta_1$ for some $\beta\in [0, 1]$.
\end{lem}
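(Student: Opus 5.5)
The plan is to rule out any invariant mass on the open interval $(0,1)$; since both $\tilde f_1$ and $\tilde f_2$ fix $0$ and $1$, this gives the claim. Suppose $\mu$ is invariant with $\mu((0,1))>0$. Both maps are increasing homeomorphisms of $[0,1]$ fixing the endpoints, so they preserve $(0,1)$. Hence the normalized restriction $\nu$ of $\mu$ to $(0,1)$ is again an invariant probability measure. I will show that for every $x\in(0,1)$ the chain $Z_n(x)$ converges almost surely to $\{0,1\}$. Then for every compact $K\subset(0,1)$ we get $\nu(K)=\mathbb P_\nu(Z_n\in K)\to 0$ by dominated convergence, which contradicts $\nu((0,1))=1$.

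The first step is local behaviour near the endpoints. Near $0$ the two maps are linear with slopes $2(1-c)$ and $2c$. Their average logarithmic slope is $\tfrac12\log(4c(1-c))<0$, because $c\neq\tfrac12$. Choose $\alpha>0$ small enough that $\rho:=\tfrac12\big((2(1-c))^\alpha+(2c)^\alpha\big)<1$; this is possible since the derivative in $\alpha$ at $0$ is that negative mean. On $[0,\tfrac1{2(1-c)}\cdot\tfrac12]$ both maps keep points in $[0,\tfrac12]$, and the function $V(x)=x^\alpha$ satisfies $UV\le\rho V$ there. Fix such a $\delta$. Started at $x\le\delta'<\delta$, the process $V(Z_{n\wedge\tau})$ is a nonnegative supermartingale, where $\tau$ is the exit time from $[0,\delta]$. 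The optional stopping/maximal inequality gives $\mathbb P(\tau<\infty)\le(\delta'/\delta)^\alpha$. On $\{\tau=\infty\}$ we have $\mathbb E V(Z_n)\le\rho^n$, so $Z_n\to0$. The symmetry $f_2(x)=1-f_1(1-x)$, which is checked directly from the formulas, transfers the same statement to a neighbourhood of $1$.

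The second step is to reach a neighbourhood of $0$ from the middle. I will check that $f_2(x)<x$ on $(0,1)$: on $[0,\tfrac12]$ this is clear, and on $(\tfrac12,1)$ it reduces to $(1-2c)x<1-2c$. Hence $f_2^n\to0$ pointwise and monotonically on $(0,1)$. By Dini's theorem, the convergence is uniform on $[\delta,1-\delta]$. So there is $m$ with $f_2^m([\delta,1-\delta])\subset[0,\delta']$. Consequently, from any $x\in(0,1)$ the probability $h(x):=\mathbb P_x(Z_n\to\{0,1\})$ is at least $p:=2^{-m}\big(1-(\delta'/\delta)^\alpha\big)>0$. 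This covers the points already near an endpoint by the first step, and the middle points by first applying $f_2$ $m$ times.

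The final step is a zero–one argument. By the Markov property, $h(Z_n)=\mathbb P(A\mid\mathcal F_n)$ for the tail event $A=\{Z_n\to\{0,1\}\}$. Lévy's martingale convergence theorem gives $h(Z_n)\to\mathbf 1_A$ almost surely. Since $h\ge p>0$ everywhere, $\mathbf 1_A\ge p$, so $A$ has full probability for every starting point, as required. I expect the main obstacle to be the choice of $\delta$ and the supermartingale estimate near the endpoints, specifically making sure the maps stay in their linear regime while the chain is below $\delta$. The rest is standard.
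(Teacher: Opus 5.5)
Your argument is correct, but it takes a genuinely different and heavier route than the paper. The paper never looks at trajectories. It uses essentially your Lyapunov function with $\alpha=\tfrac12$, truncated as $V(x)=\min\{1,\sqrt{x/x_0}\}$ with $x_0=\tfrac{1}{4(1-c)}$. It notes that $UV\le dV$ on $[0,x_0]$, where $d=\tfrac{\sqrt2}{2}(\sqrt{c}+\sqrt{1-c})<1$ is your $\rho$ at $\alpha=\tfrac12$, and that $UV\le 1=V$ elsewhere. It then integrates against the invariant measure, normalized so that $\mu(\{0,1\})=0$. The identity $\int V\,d\mu=\int UV\,d\mu$ forces $(1-d)\int_{(0,x_0]}V\,d\mu\le 0$, i.e.\ $\mu((0,x_0])=0$. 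This contradicts $0\in\supp\mu$, which comes from $f_2(x)<x$ on $(0,1)$ --- the same fact that drives your second step. So the paper needs the estimate at one endpoint only, with no symmetry, stopping times or zero--one law.

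Your version turns the same mean contraction $UV\le\rho V$ into a pathwise statement and proves more: every trajectory started in $(0,1)$ converges almost surely to $\{0,1\}$, i.e.\ $(0,1)$ is transient. This is a self-contained replacement for the qualitative part of what the paper later extracts from the Czudek--Szarek theorem in Lemma~\ref{thm:negative_convergent}. It does not, however, give the absorption probabilities $1-x$ and $x$.

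Two small things to tidy. First, your covering of $(0,1)$ has a hole. Your first step handles $[0,\delta']\cup[1-\delta',1]$. By monotonicity of $f_2$, the inclusion $f_2^m([\delta,1-\delta])\subset[0,\delta']$ handles $[0,1-\delta]$. Nothing handles $(1-\delta,1-\delta')$; just choose $m$ with $f_2^m(1-\delta')\le\delta'$. Second, the sentence ``on $\{\tau=\infty\}$ we have $\mathbb E V(Z_n)\le\rho^n$'' is not meaningful as written. Say instead that $\rho^{-(n\wedge\tau)}V(Z_{n\wedge\tau})$ is a nonnegative supermartingale, hence converges almost surely, so $V(Z_n)=O(\rho^n)$ on $\{\tau=\infty\}$.
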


\begin{proof} Assume, contrary to our claim, that there exists an invariant measure $\mu_*$ such that $\mu_*((0, 1))>0$. There is no loss of generality in assuming that $\mu_*(\{0, 1\})=0$. Since $\tilde f_2(x)<x<\tilde f_1(x)$ for $x\in (0, 1)$,  $0$ and $1$ belong to the support of $\mu_*$. Set $x_0=f_1^{-1}(\tfrac{1}{2})$, and define the function
\[
V(x) = 
\begin{cases} 
\sqrt{\tfrac{x}{x_0}} & x\in [0, x_0], \\
1 & x\in (x_0,  1].
\end{cases}
\]
Note that for $x\in [0, x_0]$ we have
\[
UV(x)=\tfrac{1}{2} \sqrt{\tfrac{2(1-c) x}{x_0}}+\tfrac{1}{2} \sqrt{\tfrac{2 c x}{x_0}}=\tfrac{\sqrt{2}}{2}\left(\sqrt{1-c}+ \sqrt{c }\right)\sqrt{\tfrac{x}{x_0}}.
\]
Let $d:=\tfrac{\sqrt{2}}{2}\left(\sqrt{1-c}+ \sqrt{c }\right)$, we obtain $UV(x)\le cV(x)$ for $x\in [0, x_0]$ and $UV(x)\le 1=V(x)$ for $x\in [x_0, 1]$. If $\mu_*$ is invariant, then we have
\[
\int_X V(x)\mu_*(\d x)=\int_X UV(x)\mu_*(\d x)\le d\int_{(0, x_0]} V(x)\mu_*(\d x)+\int_{[x_0, 1)} V(x)\mu_*(\d x),
\]
which is impossible due to the fact that $d<1$ and $\mu_*((0, x_0])>0$. Thus all invariant measures are supported at $0$ and $1$.
\end{proof}

\begin{rem} The lemma above follows from analogous results for iterated function systems with negative Lyapunov exponents (see \cite{Homburg}). Because the proof simplifies significantly in the present setting, we include it here for completeness. 

\end{rem}

\begin{lem}\label{thm:negative_convergent} Let $P$ denote the Markov operator for $({\tilde f}_1, {\tilde f}_2; \tfrac{1}{2}, \tfrac{1}{2})$. Then for any $x\in (0, 1)$ the sequence $(P^n\delta_x)_{n\ge 1}$ converges weakly to $(1-x)\delta_0+x\delta_1$. 
\end{lem}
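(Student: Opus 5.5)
Since $[0,1]$ is compact and $P$ is Feller, the sequence $(P^n\delta_x)_{n\ge1}$ is tight. I would show that every weak limit point is an invariant measure, so by Lemma~\ref{negative} it has the form $\beta\delta_0+(1-\beta)\delta_1$. Then I would identify $\beta$ with a martingale argument.

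\textbf{Limit points.} Weak limits of $P^n\delta_x$ need not be invariant. Limits of the Cesàro averages $\frac1n\sum_{k<n}P^k\delta_x$ are, because $P$ is Feller. So I would first show that the Cesàro averages converge to $\beta\delta_0+(1-\beta)\delta_1$. Afterwards I would upgrade this to convergence of $P^n\delta_x$ itself using the almost sure convergence of the martingale below.

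\textbf{Key observation.} The identity function $\iota(x)=x$ is harmonic: $U\iota=\iota$. Indeed, for $x\le \frac12$,
\[
\tfrac12\bigl(2(1-c)x+2cx\bigr)=x,
\]
and for $x>\frac12$,
\[
\tfrac12\bigl(2c(x-\tfrac12)+1-c+2(1-c)(x-\tfrac12)+c\bigr)=x-\tfrac12+\tfrac12=x.
\]
Hence $Z_n(x)$ is a bounded martingale. It converges almost surely to some $Z_\infty$, and $\mathbb E Z_\infty=x$.

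\textbf{The limit lies in $\{0,1\}$.} I would show $Z_\infty\in\{0,1\}$ a.s. On the event where $Z_\infty=z\in(0,1)$, the increments $|Z_{n+1}-Z_n|=|f_{i_{n+1}}(Z_n)-Z_n|$ are bounded below by a positive constant near $z$, since $f_1(y)-y\ne0$ and $y-f_2(y)\neq0$ there. This contradicts convergence, so the event has probability zero. Alternatively, the law of $Z_\infty$ is a weak limit of $P^n\delta_x$. It is invariant, being the distribution of the limit of a Markov chain with continuous transitions: if $Z_n\to Z_\infty$ a.s. then $Z_{n+1}=\tilde f_{i_{n+1}}(Z_n)$ with $i_{n+1}$ independent, so $\mathrm{law}(Z_\infty)=P\,\mathrm{law}(Z_\infty)$ by continuity of $\tilde f_i$. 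Lemma~\ref{negative} then gives the conclusion again.

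\textbf{Conclusion.} Almost sure convergence implies convergence in law, so $P^n\delta_x\to \mathrm{law}(Z_\infty)=\beta\delta_0+(1-\beta)\delta_1$. The martingale property gives $1-\beta=\mathbb E Z_\infty=x$, which yields the claimed limit $(1-x)\delta_0+x\delta_1$.

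\textbf{Main obstacle.} The only subtle step is showing that $Z_\infty\notin(0,1)$. The increment argument settles it directly. The functions $|f_i(y)-y|$ are continuous on $[0,1]$ and vanish only at $0$ and $1$. Hence on any compact subinterval $[a,b]\subset(0,1)$ every step has size at least $\delta>0$, which is incompatible with convergence to a point of $(a,b)$.
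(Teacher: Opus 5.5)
Your proof is correct, and it takes a genuinely different and more elementary route than the paper. The paper passes to the inverse system $(\tilde f_1^{-1},\tilde f_2^{-1};\tfrac12,\tfrac12)$ and invokes Theorem~2 of the cited Czudek--Szarek paper. This gives a random point $\upsilon(\omega)$ whose law is the unique invariant measure of that system, which the paper identifies with Lebesgue measure (citing Alsed\`a--Misiurewicz). It then shows that for a.e.\ $\omega$ the forward orbit of $x$ tends to $0$ if $x<\upsilon(\omega)$ and to $1$ if $x>\upsilon(\omega)$. So the weights are $\mathbb P(\upsilon>x)=1-x$ and $\mathbb P(\upsilon<x)=x$.

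You instead exploit $U\iota=\iota$. This is exactly the dual of Lebesgue measure being invariant for the inverse system: take $A=[0,x]$ in the invariance equation. Bounded martingale convergence then gives $Z_n(x)\to Z_\infty$ a.s.\ with $\mathbb E Z_\infty=x$. The identity $|f_i(y)-y|=(1-2c)\min(y,1-y)$, valid for both $i$, shows deterministically that any convergent trajectory must end in $\{0,1\}$. Your argument is therefore self-contained: it needs neither Lemma~\ref{negative}, nor the external ergodic theorem, nor the identification of the stationary measure.

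What the paper's route buys is a stronger pathwise picture. A single random threshold $\upsilon(\omega)$ splits all starting points simultaneously into the basins of $0$ and $1$. The method also does not depend on finding an explicit harmonic function.

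Your opening paragraphs on tightness, invariance of limit points and Ces\`aro averages are unnecessary. Almost sure convergence of the martingale already gives weak convergence of $P^n\delta_x=\mathrm{law}(Z_n(x))$. So that scaffolding, and the alternative appeal to Lemma~\ref{negative}, can be dropped.
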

\begin{proof}
Recall that for $(i_1, \ldots, i_n)\in\{1, 2\}^n$ we set ${\tilde f}_{i_n, \cdots, i_1}=\tilde{f}_{i_n}\circ\cdots\circ \tilde{f}_{i_1}$, and let $\sigma\colon \{1, 2\}^{\mathbb N}\mapsto \{1, 2\}^{\mathbb N}$ denote the left shift, i.e. $\sigma(i_1, i_2,\ldots)=\sigma(i_2, i_3,\ldots)$ for $\omega=(i_1, \ldots )$.

In the first step we are going to show that there exist a measurable function $\upsilon:
\Omega\to (0,1)$ such that for a.e. $\omega\in\Omega$, $\omega=(i_1,i_2,\dots i_n,\dots)$  the 
following holds:
if $x<\upsilon(\omega)$  then
\begin{equation}\label{e1.5.10.15}
{\tilde f}_{i_n, \cdots, i_1}(x)\to 0\quad\text{as $n\to\infty$}
\end{equation}
and if   $x>\upsilon(\omega)$ for $\omega=(i_1, i_2, \ldots)$, then
\begin{equation}\label{e2.5.10.15}
{\tilde f}_{i_n,\cdots, i_1}(x)\to 1\quad\text{as $n\to\infty$.}
\end{equation}

Consider the Iterated Function System $(\tilde {f}_1^{-1}, \tilde{f}_2^{-1}; \tfrac{1}{2}, \tfrac{1}{2})$ and note that it satisfies the hypothesis of Theorem 2 in \cite{Czudek_Szarek_Israel}.  Therefore, it has a unique invariant probability measure $\mu_*$ on $(0, 1)$. It is easy to see that $\mu_*$ has to be equal to the Lebesgue measure (see \cite{AM}). Let $\upsilon(\omega)$, $\omega\in\Omega'$, be defined on a subset $\Omega'\subset \Omega$ of full measure,  as in Theorem 2 in \cite{Czudek_Szarek_Israel}. Note  that $\upsilon(\omega)\in (0, 1)$ $\mathbb P$-a.s., so we can assume that $\upsilon(\omega)\in (0, 1)$ for all $\omega\in\Omega'$. Moreover, $\upsilon(\sigma(\omega))=f_{i_1}(\upsilon(\omega))$ and $\mu_\ast$ is the distribution of $\upsilon$. Choose some  $\omega=(i_1, i_2, \ldots)\in\Omega'$.  We shall  show that if $x<\upsilon(\omega)$,  then (\ref{e1.5.10.15}) holds
and if   $x>\upsilon(\omega)$,  then (\ref{e2.5.10.15}) holds.
 Since both points $0, 1\in\supp\mu_*$, we have 
\begin{equation}\label{eq:inter}
{\tilde f}^{-1}_{i_1}\circ\cdots\circ {\tilde f}^{-1}_{i_n}(u)\to\upsilon(\omega)
\end{equation}
for every $u\in(0,1)$.  Assume contrary to our claim, that 
(\ref{e1.5.10.15}) does not hold. Then there exists a sequence $(n_m)_{m\ge 1}$ such that ${\tilde f}_{i_{n_m},\cdots, i_1}(x)\to c>0$ as $m\to\infty$. Let $\eta>0$ be such that $c-\eta>0$. Then, using \eqref{eq:inter} for $u:=c-\eta$,  we have
$$
{\tilde f}^{-1}_{i_1}\circ\cdots\circ {\tilde f}^{-1}_{i_{n_m}}(c-\eta)\le {\tilde f}^{-1}_{i_1}\circ\cdots\circ {\tilde f}^{-1}_{i_{n_m}}\left ({\tilde f}_{i_{n_m},\cdots, i_1}(x)\right )=x,
$$ 
which is impossible for ${\tilde f}^{-1}_{i_1}\circ\cdots\circ {\tilde f}^{-1}_{i_{n_m}}(c-\eta)\to\upsilon(\omega)>x$ as $m\to\infty$. Thus condition (\ref{e1.5.10.15}) holds. 
The proof of \eqref{e2.5.10.15} is analogous.

From \eqref{e1.5.10.15} and \eqref{e2.5.10.15} it easily follows that $P^n\delta_x$ converges weakly to $\Prob(\upsilon>x)\delta_0+\Prob(\upsilon<x)\delta_1$ for any $x\in (0,1)$. As mentioned before the distribution of $\upsilon$ is the Lebesgue measure $\mu_\ast$, therefore $\Prob(\upsilon<x)=x$ and $\Prob(\upsilon>x)=1-x$, which immediately implies the assertion.
\end{proof}

\section{Proof of Theorem \ref{T}}

{\bf The e-chain property:} Fix $x_0\in (0, 1)$.  Note that by Lemma \ref{thm:negative_convergent} for any bounded and continuous function $\varphi$ on $[0, 1)$ we have 
\begin{equation}\label{e1_28.2.26}
\lim_{n\to\infty} U^n\varphi(x)= (1-x) \varphi(0)+ x \lim_{x\to 1} \varphi(x), \qquad\text{for every $x\in (0, 1)$}.
\end{equation}
It is enough to verify the e-property for a function $\varphi$ being increasing and constant for all $x$ closed to $1$, by Remark \ref{r1_27.02.26}. Then $\lim_{x\to 1} \varphi(x)=\|\varphi\|$ and by (\ref{e1_28.2.26}) we have 
\begin{equation}
\lim_{n\to\infty} U^n\varphi(x)= (1-x) \varphi(0)+ x \|\varphi\| \qquad\text{for every $x\in (0, 1)$}.
\end{equation}

On the other hand, since $\varphi, f_1, f_2$ are increasing function for any $\delta>0$ such that $x_0-\delta, x_0+\delta\in (0, 1)$ we have

\[
\begin{aligned}
\lim_{n\to\infty} \sup_{x, y\in [x_0-\delta, x_0+\delta]}|U^n\varphi(x)-U^n\varphi(y)|
&\le\limsup_{n\to\infty} \big(U^n\varphi(x_0+\delta)-U^n\varphi(x_0-\delta)\big)\\
&=
2\delta(\|\varphi\|-\varphi(0)).
\end{aligned}
\]
To complete the proof of the e-property fix $\varepsilon>0$, and let $\delta>0$ be such that $2\delta(\|\varphi\|-\varphi(0))<\varepsilon$. Then we obtain that $\sup_{x, y\in [x_0-\delta, x_0+\delta]}|U^n\varphi(x)-U^n\varphi(y)|<\varepsilon$ for all $n$ sufficiently large. Decreasing $\delta$ if necessary, we obtain the e--property. 
\vskip3mm
 {\bf $(Z_n)_{n\ge 0}$ has a unique invariant measure:} Indeed, $\mu_*=\delta_0$ is the unique invariant measure for $(Z_n)_{n\ge 0}$. 
\vskip3mm
\noindent {\bf Property (D):} If for $x\in (0, 1)$ condition (D) is satisfied, then
\[
\lim_{n\to\infty}\tfrac{1}{n} \sum_{k=1}^n U^k\varphi(x) =\mathbb E\left(\lim_{n\to\infty}\tfrac{1}{n} \sum_{k=1}^n \varphi (Z_k(x))\right)=\int_{[0, 1)} \varphi\d\mu_*.
\]
On the other hand, by (\ref {e1_28.2.26}) for $\varphi\in C_c([0, 1))$ we have
\[
\lim_{n\to\infty}\tfrac{1}{n} \sum_{k=1}^n U^k\varphi(x) =(1-x)\varphi(0)=(1-x) \int_{[0, 1)} \varphi\d\mu_*.
\]
This completes the proof.

\begin{rem} Note that the space $[0, 1)$ is not complete. But if we consider its homeomorphic image $[0, \infty)$ by $h(x)=\tan (\tfrac{\pi}{2} x)$ and the iterated function system $(h\circ f_1 \circ h^{-1}, h\circ f_2 \circ h^{-1}; \tfrac{1}{2}, \tfrac{1}{2})$ the counterexample will be valid on the complete space $[0, \infty)$.
\end{rem}


\begin{thebibliography}{999}
\bibitem{AM} L. Alsed\`{a} and M. Misiurewicz, {\it Random interval homeomorphisms}, Proceedings of New Trends in Dynamical Systems. Salou, 2012, Publ. Mat., 15 - 36 (2014).

\bibitem{Breiman} L. Breiman, The strong law of large numbers for a class of Markov chains, {\it Ann. Math. Statist.} {\bf 31} (1960), 801--803.

\bibitem{BS}
K. Bara\'nski and A. \'Spiewak, On the dimension of stationary measures for random piecewise affine interval homeomorphisms, {\it Ergodic Theory Dynam. Systems} {\bf 44} (2024), no.~6, 1473--1488.


\bibitem{Chaos_game}
M.~F. Barnsley, {\it Fractals everywhere}, second edition, 
Academic Press Prof., Boston, MA, 1993.

\bibitem{Czudek}
K. Czudek, Alsed\`a-Misiurewicz systems with place-dependent probabilities, {\it Nonlinearity} {\bf 33} (2020), no.~11, 6221--6243.

\bibitem{Czudek_Szarek_Israel}
K. Czudek and T. Szarek, {Ergodicity and central limit theorem for random interval
              homeomorphisms}, Israel J. Math., {\bf 239 (1)}, 75--98 (2020).
              
\bibitem{Elton} J.~H. Elton, An ergodic theorem for iterated maps, {\it Ergodic Theory Dynam. Systems} {\bf 7} (1987), no.~4, 481--488.              
              
              
 \bibitem{Homburg}
M. Gharaei and A. J. Homburg, 
{Random interval diffeomorphisms},
{\it Discrete Contin. Dyn. Syst. Ser. S}, {\bf 10 (2)}, 241--272 (2017).

\bibitem{Hutchinson}                
J.~E. Hutchinson, {Fractals and self-similarity}, {\it Indiana Univ. Math. J.} {\bf 30} (1981), no.~5, 713--747.

\bibitem{LasotaSzarek}
A. Lasota and T. Szarek, {Lower bound technique in the theory of a stochastic
              differential equation},
{\it J. Differential Equations}
{\bf 231} (2006),
 no.~2, 513--533.
    
\bibitem{Meyn_Tweedie} 
S.P. Meyn and R.L. Tweedie,
{\it Markov Chains and Stochastic Stability}
Communications and Control Engineering Series. Springer--Verlag London, 1993.
    
                
\bibitem{Stenflo} 
 \"O. Stenflo, {\it Uniqueness of invariant measures for place-dependent random
              iterations of functions}, {Fractals in multimedia ({M}inneapolis, {MN}, 2001)},
IMA Vol. Math. Appl., {\bf{132}}, 13--32, Springer, New York, 2002.

\bibitem{Stettner}
\L. Stettner, Remarks on ergodic conditions for Markov processes on Polish spaces, {\it Bull. Polish Acad. Sci. Math.} {\bf 42} (1994), no.~2, 103--114.

\bibitem{Szarek}
T. Szarek, {Feller processes on nonlocally compact spaces}, {\it Ann. Probab.} {\bf 34} (2006), no.~5, 1849--1863.       

\end{thebibliography}
\end{document}